\newcommand{\lyxaddress}[1]{
	\par {\raggedright #1
	\vspace{1.4em}
	\noindent\par}
}
\theoremstyle{plain}
\newtheorem{thm}{\protect\theoremname}
\theoremstyle{definition}
\newtheorem{defn}[thm]{\protect\definitionname}
\theoremstyle{remark}
\newtheorem{rem}[thm]{\protect\remarkname}
\theoremstyle{plain}
\newtheorem{cor}[thm]{\protect\corollaryname}
\theoremstyle{plain}
\newtheorem{lem}[thm]{\protect\lemmaname}
\providecommand{\corollaryname}{Corollary}
\providecommand{\definitionname}{Definition}
\providecommand{\lemmaname}{Lemma}
\providecommand{\remarkname}{Remark}
\providecommand{\theoremname}{Theorem}
\begin{document}
\title{On the Calculation of Differential Parametrizations for the Feedforward
Control of an Euler-Bernoulli Beam}
\author{Bernd Kolar\thanks{E-mail: bernd.kolar@jku.at}\ , Nicole Gehring
and Markus Schöberl}
\maketitle

\lyxaddress{\begin{center}
Institute of Automatic Control and Control Systems Technology, Johannes
Kepler University Linz, Altenbergerstraße 66, 4040 Linz, Austria
\par\end{center}}
\begin{abstract}
This contribution is concerned with the motion planning for underactuated
Euler-Bernoulli beams. The design of the feedforward control is based
on a differential parametrization of the beam, where all system variables
are expressed in terms of a free time function and its infinitely
many derivatives. In the paper, we derive an advantageous representation
of the set of all formal differential parametrizations of the beam.
Based on this representation, we identify a well-known parametrization,
for the first time without the use of operational calculus. This parametrization
is a flat one, as the corresponding series representations of the
system variables converge. Furthermore, we discuss a formal differential
parametrization where the free time function allows a physical interpretation
as the bending moment at the unactuated boundary. Even though the
corresponding series do not converge, a numerical simulation using
a least term summation illustrates the usefulness of this formal differential
parametrization for the motion planning.
\end{abstract}

\section{Introduction}

The feedforward control for boundary actuated Euler-Bernoulli beams
is usually concerned with the question of how an input has to be chosen
in order to transition the beam from one steady state to another.
The flatness-based approach, originally introduced for lumped-parameter
systems, has proven very valuable for the motion planning in distributed-parameter
systems (e.g. \cite{FliessMounierRouchonRudolph_CDC1995}, \cite{LynchRudolph_IJC2002},
\cite{Rudolph_Shaker2003}, \cite{KnueppelWoittennek_TAC2015}). It
relies on a differential parametrization of the spatially-dependent
system solution in terms of a parametrizing (boundary) output, and,
in the case of the Euler-Bernoulli beam or parabolic systems like
the heat equation, involves infinite series. As these series comprise
infinitely many time derivatives of the parametrizing output, in order
to transition the system between two steady states and have the series
converge, the desired trajectory of the parametrizing output has to
be a non-analytic, smooth function of appropriate Gevrey class (e.g.
\cite{LarocheMartin_IJRNC2000}). If a Gevrey class ensuring convergence
exists, the parametrizing output is called a flat output and the differential
parametrization is a flat one (e.g. \cite{LynchRudolph_IJC2002}).
On the other hand, if no such Gevrey class exists, the differential
parametrization is specified by the prefix formal. However, even in
this case, a flatness-based motion planning may still be possible
(e.g. \cite{LarocheMartin_IJRNC2000}, \cite{WagnerMeurerZeitz2004}).

The main challenge of the flatness-based approach lies in finding
a differential parametrization. For the heat conduction problem this
is either based on the ansatz of a power series in the spatial variable
(e.g. \cite{LarocheMartin_IJRNC2000}), the ansatz in \cite{LarocheMartin_MTNS2000}
which generalizes the Brunovsky decomposition, or some kind of operational
calculus (e.g. \cite{ECC97MiniCourse}, \cite{Rudolph_Shaker2003}).
All these ideas could be applied in a straightforward way to the fully
actuated Euler-Bernoulli beam, i.e. configurations with two boundary
inputs at one end. In contrast, in the more difficult case with only
one input, considered here, all differential parametrizations found
in the literature solely rely on operational calculus (e.g. \cite{FliessMounierRouchonRudolph_ESAIM1997},
\cite{Rudolph_Shaker2003}, \cite{RudolphWoittennek_Bookchapter2003},
\cite{MeurerSchroeckKugi_CDC2010}), to the best of the authors' knowledge.

In this paper, we demonstrate that a time-domain approach similar
to \cite{LarocheMartin_MTNS2000} is also applicable to an Euler-Bernoulli
beam with one boundary input. One of our main results is an advantageous
representation of the set of all formal differential parametrizations
of the beam. Based on this representation, it is easy to identify
a distinguished differential parametrization that is a flat one and
well-known from the literature (see e.g. \cite{Rudolph_Shaker2003}).
As no physical interpretation is known for this flat output, we discuss
an interesting formal differential parametrization with a parametrizing
output that allows an interpretation as the bending moment at the
unactuated boundary. Although the corresponding series do not converge
(for any non-analytic function), inspired by the computations with
divergent series in \cite{LarocheMartin_IJRNC2000}, simulation results
based on a least term summation illustrate that this formal differential
parametrization can still be useful for motion planning.

\paragraph*{Notation}

We use the convention that the set of natural numbers $\mathbb{N}$
includes $0$. Thus, $(a_{k})_{k\in\mathbb{N}}$ denotes the infinite
sequence $(a_{0},a_{1},a_{2},\ldots)$. By $(a_{0},a_{1},\ldots,a_{k})$
we denote a finite sequence with the last element $a_{k}$ for some
fixed $k$.

\section{\label{sec:beam} A Series Ansatz for Solutions of the Euler-Bernoulli
Beam}

In this contribution, we consider an Euler-Bernoulli beam with a clamped
end at $z=0$ and a free end at $z=1$, with the bending moment serving
as the control input $u(t)$ at the latter. The deflection $w(z,t)$
satisfies the (normalized) beam equation \begin{subequations}\label{eq:PDE_RB_Beam}
\begin{equation}
\partial_{t}^{2}w(z,t)=-\partial_{z}^{4}w(z,t)\,,\quad0\leq z\leq1\,,\,t\geq0\label{eq:PDE_Beam}
\end{equation}
and the boundary conditions
\begin{equation}
\begin{array}{rclcrcl}
w(0,t) & = & 0\,, & \quad & \partial_{z}^{2}w(1,t) & = & u(t)\\
\partial_{z}w(0,t) & = & 0\,, &  & \partial_{z}^{3}w(1,t) & = & 0\,.
\end{array}\label{eq:BC_Beam}
\end{equation}
\end{subequations} This is a classical beam configuration that has
been studied in the context of flatness-based feedforward control,
e.g., in \cite{Rudolph_Shaker2003} and \cite{HaasRudolph1999}.\footnote{The general constant-coefficient case
\[
\mu\partial_{t}^{2}w(z,t)=-EI\partial_{z}^{4}w(z,t)\,,\quad0\leq z\leq L\,,\,t\geq0
\]
with the linear mass density $\mu>0$, the flexural rigidity $EI>0$,
and a spatial domain $[0,L]$ can always be traced back to the normalized
case (\ref{eq:PDE_RB_Beam}) by transformations of the independent
variables $z$ and $t$.}

The objective of the present paper is the construction of formal differential
parametrizations for the Euler-Bernoulli beam (\ref{eq:PDE_RB_Beam}).
We call a representation\begin{subequations}\label{eq:Param_Ansatz}
\begin{eqnarray}
w(z,t) & = & \sum_{k=0}^{\infty}\alpha_{k}(z)y^{(k)}(t)\label{eq:Param_w_Ansatz}\\
u(t) & = & \sum_{k=0}^{\infty}\beta_{k}y^{(k)}(t)\label{eq:Param_u_Ansatz}
\end{eqnarray}
\end{subequations} of the deflection $w(z,t)$ and the input $u(t)$
by a parametrizing output $y(t)$ and its time derivatives a formal
differential parametrization of the system (\ref{eq:PDE_RB_Beam}),
if the series (\ref{eq:Param_w_Ansatz}) and (\ref{eq:Param_u_Ansatz})
formally satisfy the PDE (\ref{eq:PDE_Beam}) and the boundary conditions
(\ref{eq:BC_Beam}) for arbitrary smooth functions $y(t)$. It is
important to emphasize the word formal, since we are dealing with
formal solutions\footnote{The series (\ref{eq:Param_w_Ansatz}) and (\ref{eq:Param_u_Ansatz})
are formal solutions if they satisfy (\ref{eq:PDE_Beam}) and (\ref{eq:BC_Beam})
after formally interchanging differentiation and summation, even if
they do not converge.} and discuss the problem of finding parametrizations (\ref{eq:Param_Ansatz})
separately from the question of convergence, see also \cite{WagnerMeurerZeitz2004}
or \cite{LarocheMartin_IJRNC2000}. For a convergence analysis, $y(t)$
has to be restricted to so-called Gevrey functions with appropriately
bounded derivatives, see e.g. \cite{LarocheMartin_IJRNC2000}.
\begin{defn}
A smooth function $y(t)$ defined on $\mathbb{R}^{+}$ is Gevrey of
order $\gamma$ if there exist constants $M,R>0$ such that
\begin{equation}
\sup_{t\in\mathbb{R}^{+}}\left|y^{(m)}(t)\right|\leq M\tfrac{(m!)^{\gamma}}{R^{m}}\,,\quad\forall m\in\mathbb{N}\,.\label{eq:Gevrey_class}
\end{equation}
\end{defn}

Gevrey functions of order $\gamma=1$ are analytic. For planning transitions
between equilibria of the system (\ref{eq:PDE_RB_Beam}), which are
characterized by constant values of $y(t)$, they cannot be used.
The reason is that any analytic function that is constant on an open
subset of $\mathbb{R}^{+}$ is constant everywhere. In contrast, this
is no longer true for Gevrey functions $y(t)$ of order $\gamma>1$.
If the convergence of the series (\ref{eq:Param_w_Ansatz}) and (\ref{eq:Param_u_Ansatz})
can be shown for a Gevrey class with $\gamma>1$, the parametrizing
output $y(t)$ is called a flat output.
\begin{rem}
In the context of motion planning, the question arises whether the
span over the set $\{\alpha_{k}(z)$, $k\in\mathbb{N}\}$ of functions
of a given parametrization (\ref{eq:Param_w_Ansatz}) is dense in
the state space, see also \cite{LarocheMartin_MTNS2000}. However,
this question is not within the scope of the present paper. 
\end{rem}

In order to derive conditions on the functions $\alpha_{k}(z)$ and
coefficients $\beta_{k}$, let us plug the ansatz (\ref{eq:Param_Ansatz})
into the PDE (\ref{eq:PDE_Beam}) and the boundary conditions (\ref{eq:BC_Beam}).
Since the resulting equations must hold for arbitrary smooth functions
$y(t)$, after formally interchanging differentiation and summation,
the factors of all time derivatives of $y(t)$ have to vanish. Hence\begin{subequations}\label{eq:Conditions_alpha}
\begin{eqnarray}
\alpha_{0}^{\prime\prime\prime\prime}(z) & = & 0\nonumber \\
\alpha_{1}^{\prime\prime\prime\prime}(z) & = & 0\nonumber \\
\alpha_{k}^{\prime\prime\prime\prime}(z) & = & -\alpha_{k-2}(z)\,,\quad k\geq2\label{eq:ODE_alpha}
\end{eqnarray}
and
\begin{equation}
\left.\begin{array}{cclcccl}
\alpha_{k}(0) & = & 0\,, & \quad & \alpha_{k}^{\prime\prime}(1) & = & \beta_{k}\\
\alpha_{k}^{\prime}(0) & = & 0\,, &  & \alpha_{k}^{\prime\prime\prime}(1) & = & 0
\end{array}\quad\right\} \quad k\geq0\,.\label{eq:BC_alpha}
\end{equation}
\end{subequations}Therein, $^{\prime}$ denotes the differentiation
with respect to the spatial variable $z$. With (\ref{eq:Conditions_alpha})
we have to solve a sequence of boundary value problems in the independent
variable $z$. Obviously, for all $k\geq2$, the function $\alpha_{k}(z)$
follows from a fourfold integration of $-\alpha_{k-2}(z)$. The integration
constants are determined by the boundary values (\ref{eq:BC_alpha}).
Consequently, the functions $\alpha_{k}(z)$ of the series representation
(\ref{eq:Param_w_Ansatz}) of the deflection $w(z,t)$ are uniquely
determined by the coefficients $(\beta_{0},\beta_{1},\ldots,\beta_{k})$
of the series representation (\ref{eq:Param_u_Ansatz}) of the input
$u(t)$. More precisely, the functions $\alpha_{k}(z)$ with even
$k$ are determined by the coefficients $(\beta_{0},\beta_{2},\ldots,\beta_{k})$
with even indices, the functions $\alpha_{k}(z)$ with odd $k$ by
the coefficients $(\beta_{1},\beta_{3},\ldots,\beta_{k})$ with odd
indices. Hence, the functions $\alpha_{k}(z)$ with even and odd indices
can be calculated independently. In particular, setting all $\beta_{k}$
with odd $k$ to zero yields a parametrization of the form\begin{subequations}\label{eq:Param_even}
\begin{eqnarray}
w(z,t) & = & \sum_{k=0}^{\infty}\alpha_{2k}(z)y^{(2k)}(t)\label{eq:Param_w_even}\\
u(t) & = & \sum_{k=0}^{\infty}\beta_{2k}y^{(2k)}(t)\,,\label{eq:Param_u_even}
\end{eqnarray}
\end{subequations}where only time derivatives of even order occur.
In the remainder of the paper, for simplicity, we restrict ourselves
to this special case. The calculations for the case with odd indices
could be performed in an analogous way.

Integrating $\alpha_{0}^{\prime\prime\prime\prime}(z)=0$ four times
and taking account of (\ref{eq:BC_alpha}) yields the first function
\begin{equation}
\alpha_{0}(z)=\tfrac{1}{2}\beta_{0}z^{2}\label{eq:alpha_0}
\end{equation}
of the series (\ref{eq:Param_w_even}). Note that $\alpha_{0}(z)$
corresponds to an equilibrium profile of the beam, since all time
derivatives of $y(t)$ vanish in steady state. Continuing by solving
$\alpha_{2}^{\prime\prime\prime\prime}(z)=-\alpha_{0}(z)$ and $\alpha_{4}^{\prime\prime\prime\prime}(z)=-\alpha_{2}(z)$,
it can be shown that the functions $\alpha_{2k}(z)$ are polynomials
of the form
\begin{equation}
\alpha_{2k}(z)=\sum_{i=0}^{k}\left(c_{k,i}z^{4i+2}+d_{k,i}z^{4i+3}\right)\label{eq:Ansatz_alpha2k}
\end{equation}
with $d_{k,k}=0$, where $c_{k,i}$ denotes the coefficients of the
even powers $z^{4i+2}$, and $d_{k,i}$ denotes the coefficients of
the odd powers $z^{4i+3}$.

In principle, by successive integration we could calculate all functions
$\alpha_{2k}(z)$ -- up to some arbitrary index $k$ -- in terms
of the sequence $(\beta_{0},\beta_{2},\ldots,\beta_{2k})$. However,
this is cumbersome and does not answer the important question how
one has to choose the sequence $(\beta_{2k})_{k\in\mathbb{N}}$ in
order for the series (\ref{eq:Param_w_even}) and (\ref{eq:Param_u_even})
to converge for a sufficiently large class of functions $y(t)$. Therefore,
in the following section, we study the connections between the sequence
$(\beta_{2k})_{k\in\mathbb{N}}$ and the coefficients of the polynomials
(\ref{eq:Ansatz_alpha2k}) in more detail.

\section{\label{sec:FormalDiffParam}Derivation of Formal Differential Parametrizations}

First, we determine how the coefficients of (\ref{eq:Ansatz_alpha2k})
depend on the coefficients of
\begin{equation}
\alpha_{2(k-1)}(z)=\sum_{i=0}^{k-1}\left(c_{k-1,i}z^{4i+2}+d_{k-1,i}z^{4i+3}\right)\,.\label{eq:Ansatz_alpha2k-1}
\end{equation}
Integrating $-\alpha_{2(k-1)}(z)$ four times and taking account of
(\ref{eq:BC_alpha}), a comparison with (\ref{eq:Ansatz_alpha2k})
shows that the coefficients of (\ref{eq:Ansatz_alpha2k}) can be calculated
from the coefficients of (\ref{eq:Ansatz_alpha2k-1}) according to
\begin{eqnarray}
c_{k,0} & = & \tfrac{1}{2}\beta_{2k}-\tfrac{1}{2}\sum_{i=0}^{k-1}\left(c_{k-1,i}\tfrac{1}{4i+4}+d_{k-1,i}\tfrac{1}{4i+5}\right)\label{eq:ck0_var1}\\
d_{k,0} & = & \tfrac{1}{6}\sum_{i=0}^{k-1}\left(c_{k-1,i}\tfrac{1}{4i+3}+d_{k-1,i}\tfrac{1}{4i+4}\right)\label{eq:dk0_var1}
\end{eqnarray}
and
\begin{equation}
\left.\begin{array}{ccc}
c_{k,i} & = & -c_{k-1,i-1}\frac{(4i-2)!}{(4i+2)!}\\
d_{k,i} & = & -d_{k-1,i-1}\frac{(4i-1)!}{(4i+3)!}
\end{array}\quad\right\} \quad1\leq i\leq k\,.\label{eq:cki_dki_var1}
\end{equation}
It can be observed that, according to (\ref{eq:ck0_var1}) and (\ref{eq:dk0_var1}),
the coefficients $c_{k,0}$ and $d_{k,0}$ of the powers $z^{2}$
and $z^{3}$ in (\ref{eq:Ansatz_alpha2k}) depend on all coefficients
of (\ref{eq:Ansatz_alpha2k-1}) and on $\beta_{2k}$. In contrast,
the coefficients of the higher powers of $z$ in (\ref{eq:Ansatz_alpha2k})
depend only on one coefficient of (\ref{eq:Ansatz_alpha2k-1}) each
(cf. (\ref{eq:cki_dki_var1})).

Next, it is straightforward to show that the coefficients $c_{k,0}$
and $d_{k,0}$ of (\ref{eq:Ansatz_alpha2k}) can alternatively be
expressed by the sequences $(c_{0,0},c_{1,0},\ldots,c_{k-1,0})$ and
$(d_{0,0},d_{1,0},\ldots,d_{k-1,0})$, i.e. the coefficients of $z^{2}$
and $z^{3}$ of the polynomials $\alpha_{0}(z),\ldots,\alpha_{2(k-1)}(z)$,
as well as $\beta_{2k}$: By a repeated application of (\ref{eq:cki_dki_var1})
we immediately get
\begin{equation}
\left.\begin{array}{ccc}
c_{k,i} & = & (-1)^{i}\frac{2!}{(4i+2)!}c_{k-i,0}\\
d_{k,i} & = & (-1)^{i}\frac{3!}{(4i+3)!}d_{k-i,0}
\end{array}\quad\right\} \quad0\leq i\leq k\,,\label{eq:cki_dki_var2}
\end{equation}
and plugging (\ref{eq:cki_dki_var2}) into (\ref{eq:ck0_var1}) and
(\ref{eq:dk0_var1}) results in
\begin{align}
c_{k,0} & =\tfrac{1}{2}\beta_{2k}-\tfrac{1}{2}\sum_{i=0}^{k-1}(-1)^{i}\left(c_{k-1-i,0}\tfrac{2(4i+3)}{(4i+4)!}+d_{k-1-i,0}\tfrac{6(4i+4)}{(4i+5)!}\right)\label{eq:ck0_var2}\\
d_{k,0} & =\tfrac{1}{6}\sum_{i=0}^{k-1}(-1)^{i}\left(c_{k-1-i,0}\tfrac{2}{(4i+3)!}+d_{k-1-i,0}\tfrac{6}{(4i+4)!}\right)\,.\label{eq:dk0_var2}
\end{align}
Based on (\ref{eq:cki_dki_var2}), (\ref{eq:ck0_var2}), and (\ref{eq:dk0_var2}),
we can now formulate one of the key results of the paper.
\begin{thm}
\label{thm:FormalDiffParam_ck0}The functions $\alpha_{2k}(z)$ and
coefficients $\beta_{2k}$ of the formal differential parametrization
(\ref{eq:Param_even}) are uniquely determined by the sequence $(c_{k,0})_{k\in\mathbb{N}}$.
\end{thm}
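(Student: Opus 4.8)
The plan is to recover the entire parametrization from the single sequence $(c_{k,0})_{k\in\mathbb{N}}$ in three stages, using the relations already derived. The coefficients of each polynomial $\alpha_{2k}(z)$ split into the \emph{leading} entries $c_{k,0},d_{k,0}$ (the coefficients of $z^{2}$ and $z^{3}$) and the higher-order entries $c_{k,i},d_{k,i}$ for $i\geq1$. Relation (\ref{eq:cki_dki_var2}) already expresses every higher-order entry explicitly through a single leading entry with a shifted first index, so once both leading sequences $(c_{k,0})$ and $(d_{k,0})$ are known, all $\alpha_{2k}(z)$ are fixed. Hence the crux is to show that $(d_{k,0})$ is itself determined by $(c_{k,0})$, and afterwards that the $\beta_{2k}$ follow as well.

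First I would settle the sequence $(d_{k,0})$ by induction on $k$. The base case is $d_{0,0}=0$: for $k=0$ the polynomial $\alpha_{0}(z)=\tfrac{1}{2}\beta_{0}z^{2}$ from (\ref{eq:alpha_0}) has no $z^{3}$ term, in agreement with the constraint $d_{k,k}=0$ accompanying (\ref{eq:Ansatz_alpha2k}). For the inductive step I would read off relation (\ref{eq:dk0_var2}): the right-hand side of the expression for $d_{k,0}$ involves only $c_{0,0},\ldots,c_{k-1,0}$, which are given, together with $d_{0,0},\ldots,d_{k-1,0}$, which carry a strictly smaller first index and are therefore already known by the induction hypothesis. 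Thus $d_{k,0}$ is uniquely computable, and $(d_{k,0})_{k\in\mathbb{N}}$ is uniquely determined by $(c_{k,0})_{k\in\mathbb{N}}$. As a consistency check, substituting $d_{0,0}=0$ into (\ref{eq:cki_dki_var2}) returns $d_{k,k}=0$ for every $k$, matching the constraint in (\ref{eq:Ansatz_alpha2k}).

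With both leading sequences in hand, the remaining data follow by direct substitution rather than any further recursion. Plugging $(c_{k,0})$ and $(d_{k,0})$ into (\ref{eq:cki_dki_var2}) yields all $c_{k,i}$ and $d_{k,i}$ with $1\leq i\leq k$, so every polynomial $\alpha_{2k}(z)$ in (\ref{eq:Ansatz_alpha2k}) is fixed. Finally, solving relation (\ref{eq:ck0_var2}) for the input coefficient gives
\[
\beta_{2k}=2c_{k,0}+\sum_{i=0}^{k-1}(-1)^{i}\left(c_{k-1-i,0}\tfrac{2(4i+3)}{(4i+4)!}+d_{k-1-i,0}\tfrac{6(4i+4)}{(4i+5)!}\right),
\]
whose right-hand side is built entirely from the now-known leading coefficients; this determines $(\beta_{2k})_{k\in\mathbb{N}}$ uniquely. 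Together with the $\alpha_{2k}(z)$ this exhausts the data of the parametrization (\ref{eq:Param_even}).

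The only step requiring genuine care is the induction in the second stage, and the point to verify is precisely that the recursion for $(d_{k,0})$ is well founded: one must confirm from (\ref{eq:dk0_var2}) that $d_{k,0}$ never depends on $d_{j,0}$ with $j\geq k$, so that no implicit equation couples the unknowns at a fixed level. This is guaranteed by the index shift $k-1-i\leq k-1$. Everything else is substitution into closed-form expressions, so at this formal level no convergence or solvability issue arises.
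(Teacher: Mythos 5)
Your proof is correct and follows essentially the same route as the paper's: compute $(d_{k,0})_{k\in\mathbb{N}}$ recursively from (\ref{eq:dk0_var2}) starting with $d_{0,0}=0$, obtain the remaining coefficients via (\ref{eq:cki_dki_var2}), and finally solve (\ref{eq:ck0_var2}) for $\beta_{2k}$. You merely make explicit what the paper leaves implicit, namely the well-foundedness of the recursion (the index shift $k-1-i\leq k-1$) and the closed-form expression for $\beta_{2k}$.
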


\begin{proof}
If we fix a sequence $(c_{k,0})_{k\in\mathbb{N}}$, (\ref{eq:dk0_var2})
allows us to calculate step by step the corresponding sequence $(d_{k,0})_{k\in\mathbb{N}}$,
starting with $d_{0,0}=0$. Subsequently, by (\ref{eq:cki_dki_var2}),
the remaining coefficients of the functions $\alpha_{2k}(z)$ of (\ref{eq:Param_w_even})
can be determined from the sequences $(c_{k,0})_{k\in\mathbb{N}}$
and $(d_{k,0})_{k\in\mathbb{N}}$. Finally, solving (\ref{eq:ck0_var2})
for $\beta_{2k}$ yields the coefficients of (\ref{eq:Param_u_even}).
\end{proof}
Based on the fact that the sequence $(c_{k,0})_{k\in\mathbb{N}}$
can be chosen arbitrarily, the following corollary can be formulated.
\begin{cor}
\label{cor:One_to_one}There is a one-to-one correspondence between
arbitrary sequences $(c_{k,0})_{k\in\mathbb{N}}$ and formal differential
parametrizations (\ref{eq:Param_even}) of the Euler-Bernoulli beam.
\end{cor}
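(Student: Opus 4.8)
The plan is to treat Theorem~\ref{thm:FormalDiffParam_ck0} as supplying one half of the asserted correspondence and then to upgrade it to a genuine bijection. That theorem already constructs, from any prescribed sequence $(c_{k,0})_{k\in\mathbb{N}}$, a unique tuple of functions $\alpha_{2k}(z)$ and coefficients $\beta_{2k}$, and hence a unique parametrization (\ref{eq:Param_even}); call this assignment $\Phi$. The decisive observation, which makes the reverse direction almost immediate, is that $c_{k,0}$ is nothing but the coefficient of $z^{2}$ in $\alpha_{2k}(z)$, obtained by setting $i=0$ in the ansatz (\ref{eq:Ansatz_alpha2k}). Thus every parametrization (\ref{eq:Param_even}) carries a canonically associated sequence $(c_{k,0})_{k\in\mathbb{N}}$, which defines a map $\Psi$ in the opposite direction, and it remains to show that $\Phi$ and $\Psi$ are mutually inverse.

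First I would settle injectivity of $\Phi$. Since, by (\ref{eq:cki_dki_var2}) with $i=0$, the $z^{2}$-coefficient of the polynomial $\alpha_{2k}(z)$ produced by $\Phi$ equals the prescribed value $c_{k,0}$, we have $\Psi\circ\Phi=\mathrm{id}$. Consequently two distinct sequences yield parametrizations whose functions $\alpha_{2k}(z)$ already differ in the coefficient of $z^{2}$, so $\Phi$ is injective.

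For surjectivity I would argue that $\Phi\circ\Psi=\mathrm{id}$. Let an arbitrary parametrization (\ref{eq:Param_even}) be given; by the discussion in Section~\ref{sec:beam} its functions $\alpha_{2k}(z)$ necessarily have the polynomial form (\ref{eq:Ansatz_alpha2k}), so $\Psi$ extracts a well-defined sequence $(c_{k,0})_{k\in\mathbb{N}}$. Applying $\Phi$ to this sequence rebuilds $(d_{k,0})_{k\in\mathbb{N}}$ from (\ref{eq:dk0_var2}) with $d_{0,0}=0$, the remaining coefficients from (\ref{eq:cki_dki_var2}), and the $\beta_{2k}$ by solving (\ref{eq:ck0_var2}). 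The hard part, and the step I expect to carry the real content, is to confirm that these are exactly the relations satisfied by the coefficients of \emph{any} valid parametrization, and not merely by those manufactured by $\Phi$; this holds because (\ref{eq:cki_dki_var2}), (\ref{eq:ck0_var2}), and (\ref{eq:dk0_var2}) were derived above as necessary consequences of the defining conditions (\ref{eq:Conditions_alpha}). Hence the rebuilt coefficients coincide with the original ones and $\Phi\circ\Psi=\mathrm{id}$. Finally one should check compatibility of the initialisation: the empty sum in (\ref{eq:dk0_var2}) forces $d_{0,0}=0$, in agreement with $\alpha_{0}(z)=\tfrac{1}{2}\beta_{0}z^{2}$ from (\ref{eq:alpha_0}), so no hidden constraint is placed on $(c_{k,0})_{k\in\mathbb{N}}$ and the correspondence is indeed onto all parametrizations of the form (\ref{eq:Param_even}).
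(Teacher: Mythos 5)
Your proof is correct and follows essentially the same route as the paper: one direction is supplied by Theorem~\ref{thm:FormalDiffParam_ck0}, and the converse is the observation that $(c_{k,0})_{k\in\mathbb{N}}$ can be read off as the coefficients of $z^{2}$ in the functions $\alpha_{2k}(z)$. Your explicit check that the maps $\Phi$ and $\Psi$ are mutually inverse (using that (\ref{eq:cki_dki_var2}), (\ref{eq:ck0_var2}), (\ref{eq:dk0_var2}) are necessary consequences of (\ref{eq:Conditions_alpha})) only spells out what the paper's two-sentence argument leaves implicit.
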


\begin{proof}
By Theorem \ref{thm:FormalDiffParam_ck0}, every sequence $(c_{k,0})_{k\in\mathbb{N}}$
determines a unique formal differential parametrization (\ref{eq:Param_even}).
Conversely, since the sequence $(c_{k,0})_{k\in\mathbb{N}}$ consists
of the coefficients of $z^{2}$ in the functions $\alpha_{2k}(z)$
of (\ref{eq:Param_w_even}), every formal differential parametrization
(\ref{eq:Param_even}) determines a unique sequence $(c_{k,0})_{k\in\mathbb{N}}$.
\end{proof}
As already mentioned in Section \ref{sec:beam}, the functions $\alpha_{2k}(z)$
of (\ref{eq:Param_w_even}) are uniquely determined by the coefficients
$(\beta_{0},\beta_{2},\ldots,\beta_{2k})$ of (\ref{eq:Param_u_even}).
Hence, the formal differential parametrization (\ref{eq:Param_even})
is also uniquely determined by the sequence $(\beta_{2k})_{k\in\mathbb{N}}$.
However, a representation of all formal differential parametrizations
of the Euler-Bernoulli beam using the sequence $(c_{k,0})_{k\in\mathbb{N}}$
as the free design parameter is advantageous. This becomes apparent
when we want to find flat parametrizations, where the series (\ref{eq:Param_w_even})
and (\ref{eq:Param_u_even}) converge for functions $y(t)$ of appropriate
Gevrey order: Since the input $u(t)$ is the bending moment at the
free boundary, which follows from the deflection $w(z,t)$ as $u(t)=\partial_{z}^{2}w(1,t)$,
the convergence of the series (\ref{eq:Param_w_even}) implies the
convergence of the series (\ref{eq:Param_u_even}), under the assumption
that $w(z,t)$ is twice differentiable with respect to $z$ at $z=1$.
Thus, it is sufficient to check the convergence of the series representation
(\ref{eq:Param_w_even}) of the deflection $w(z,t)$. In (\ref{eq:Param_w_even}),
the elements of the sequence $(c_{k,0})_{k\in\mathbb{N}}$ appear
as coefficients of the polynomials $\alpha_{2k}(z)$. This facilitates
the construction of differential parametrizations, as will be illustrated
by means of the examples in Section \ref{sec:Examples}.

In the following, we derive explicit expressions for the sequences
$(d_{k,0})_{k\in\mathbb{N}}$ and $(\beta_{2k})_{k\in\mathbb{N}}$
in terms of the sequence $(c_{k,0})_{k\in\mathbb{N}}$.
\begin{lem}
The sequence $(d_{k,0})_{k\in\mathbb{N}}$ is generated by the discrete
convolution
\begin{equation}
d_{k,0}=\sum_{i=0}^{k}\eta_{k-i}c_{i,0}\label{eq:Convolution_dk0}
\end{equation}
of the sequence $(c_{k,0})_{k\in\mathbb{N}}$ with the sequence $(\eta_{k})_{k\in\mathbb{N}}$,
which is defined recursively by $\eta_{0}=0$ and
\begin{equation}
\eta_{k}=-\tfrac{(-1)^{k}}{3(4k-1)!}-\sum_{i=1}^{k}\eta_{k-i}\tfrac{(-1)^{i}}{(4i)!}\,,\quad k\geq1\,.\label{eq:etak}
\end{equation}
\end{lem}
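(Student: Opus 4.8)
The plan is to pass to formal power series, where the convolution \eqref{eq:Convolution_dk0} becomes a single product and the recursion \eqref{eq:dk0_var2} becomes a rational relation. Working in the ring of formal power series, I would introduce the ordinary generating functions $C(x)=\sum_{k\geq0}c_{k,0}x^{k}$, $D(x)=\sum_{k\geq0}d_{k,0}x^{k}$, and $H(x)=\sum_{k\geq0}\eta_{k}x^{k}$, so that the asserted identity \eqref{eq:Convolution_dk0} is equivalent to the single equation $D(x)=H(x)C(x)$. I would also introduce the two auxiliary series $A(x)=\sum_{i\geq0}\tfrac{(-1)^{i}}{3(4i+3)!}x^{i}$ and $B(x)=\sum_{i\geq0}\tfrac{(-1)^{i}}{(4i+4)!}x^{i}$, whose coefficients are exactly the prefactors of $c_{k-1-i,0}$ and $d_{k-1-i,0}$ appearing in \eqref{eq:dk0_var2} after absorbing the factor $\tfrac{1}{6}$.

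First I would rewrite \eqref{eq:dk0_var2}. Because the inner summation runs over the shifted index $k-1-i$, each sum is a Cauchy product shifted by one; together with the initial value $d_{0,0}=0$ (the polynomial $\alpha_{0}(z)=\tfrac{1}{2}\beta_{0}z^{2}$ has no $z^{3}$ term) this yields the compact formal identity $D(x)=x\,A(x)\,C(x)+x\,B(x)\,D(x)$. Solving for $D(x)$ and using that $1-x\,B(x)$ is invertible in the formal power series ring, since its constant term equals $1$, I obtain $D(x)=\tfrac{x\,A(x)}{1-x\,B(x)}\,C(x)$. Setting $H(x):=\tfrac{x\,A(x)}{1-x\,B(x)}$ therefore gives $D(x)=H(x)\,C(x)$, which is precisely the convolution \eqref{eq:Convolution_dk0}, provided I can identify the coefficients of this $H(x)$ with the numbers $\eta_{k}$ defined by \eqref{eq:etak}.

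For the final step I would clear the denominator, writing $H(x)=x\,A(x)+x\,B(x)\,H(x)$, and compare coefficients of $x^{k}$. The constant term forces $\eta_{0}=0$. For $k\geq1$, the contribution from $x\,A(x)$ is $a_{k-1}=\tfrac{(-1)^{k-1}}{3(4k-1)!}=-\tfrac{(-1)^{k}}{3(4k-1)!}$, which reproduces the first term of \eqref{eq:etak}; the contribution from $x\,B(x)\,H(x)$ is $\sum_{i=0}^{k-1}b_{i}\,\eta_{k-1-i}$, and after the substitution $i\mapsto i-1$ together with $b_{i-1}=-\tfrac{(-1)^{i}}{(4i)!}$ this becomes $-\sum_{i=1}^{k}\tfrac{(-1)^{i}}{(4i)!}\eta_{k-i}$, reproducing the second term of \eqref{eq:etak}. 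Hence the coefficients of $H(x)$ obey the same recursion and initial condition as the $\eta_{k}$, so they coincide and the lemma follows.

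I expect the main obstacle to be the index and factorial bookkeeping in this last step: correctly tracking the shift by one in the two Cauchy products and verifying that the factorial prefactors $\tfrac{2}{(4i+3)!}$ and $\tfrac{6}{(4i+4)!}$ in \eqref{eq:dk0_var2} line up exactly with $\tfrac{1}{3(4k-1)!}$ and $\tfrac{1}{(4i)!}$ in \eqref{eq:etak}. Everything else is routine manipulation of formal power series, and no convergence questions arise because all identities are purely formal.
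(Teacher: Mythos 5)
Your proof is correct, and it takes a genuinely different route from the paper's. The paper proceeds coefficient-wise: it argues (by successive elimination of $d_{k-1,0},d_{k-2,0},\ldots$ in (\ref{eq:dk0_var2})) that $d_{k,0}$ is a linear combination of $c_{0,0},\ldots,c_{k,0}$, takes this to justify the convolution ansatz (\ref{eq:Convolution_dk0}), fixes $\eta_{0}=0$ from $d_{0,0}=0$, substitutes the ansatz into (\ref{eq:dk0_var2}) to obtain an identity that must hold for every sequence $(c_{0,0},\ldots,c_{k,0})$, and finally extracts the recursion (\ref{eq:etak}) by specializing to the impulse sequence $(1,0,\ldots,0)$. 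Your generating-function argument replaces this ansatz-plus-specialization by a direct construction: the identity $D=xAC+xBD$ encodes (\ref{eq:dk0_var2}) together with $d_{0,0}=0$ exactly, invertibility of $1-xB(x)$ (constant term $1$) produces the kernel $H=xA/(1-xB)$ outright, and (\ref{eq:etak}) drops out of comparing coefficients in $H=xA+xBH$; your index and sign bookkeeping in that last step ($a_{k-1}=-\tfrac{(-1)^{k}}{3(4k-1)!}$, $b_{i-1}=-\tfrac{(-1)^{i}}{(4i)!}$) checks out. What your route buys is rigor and economy: the Toeplitz (shift-invariant) structure of the dependence of $d_{k,0}$ on the $c_{i,0}$ --- which the paper's ``linear combination'' remark asserts but does not actually derive --- comes for free, and existence and uniqueness of $(\eta_{k})_{k\in\mathbb{N}}$ are automatic rather than resting on an ansatz whose validity for all choices of $(c_{i,0})$ is only tested against one special sequence. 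What the paper's route buys is that it stays entirely at the level of finite sums, with no formal power series machinery; note also that its impulse trick is precisely the coefficient-level shadow of your construction, since choosing $(1,0,\ldots,0)$ amounts to setting $C(x)=1$, i.e.\ $D=H$, so the two arguments are dual views of the same discrete transfer-function computation.
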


\begin{proof}
First, it should be noted that $d_{k,0}$ could be calculated from
(\ref{eq:dk0_var2}) by eliminating successively $d_{k-1,0},d_{k-2,0},\ldots$
with shifted versions of (\ref{eq:dk0_var2}). This shows that $d_{k,0}$
is a linear combination of the coefficients of the sequence $(c_{0,0},c_{1,0},\ldots,c_{k,0})$,
justifying an ansatz of the form (\ref{eq:Convolution_dk0}). Due
to $d_{0,0}=0$, from (\ref{eq:Convolution_dk0}) we immediately get
$\eta_{0}=0$. In order to determine $\eta_{k}$, $k\geq1$, we plug
the ansatz (\ref{eq:Convolution_dk0}) into the relation (\ref{eq:dk0_var2}).
After some simplifications this yields the equation
\begin{equation}
\sum_{i=0}^{k}\eta_{k-i}c_{i,0}=\tfrac{1}{3}\sum_{n=0}^{k-1}(-1)^{k-1-n}\tfrac{1}{(4(k-n)-1)!}c_{n,0}+\sum_{n=0}^{k-1}(-1)^{k-1-n}\sum_{j=0}^{n}\eta_{n-j}\tfrac{1}{(4(k-n))!}c_{j,0}\,,\label{eq:Equation_etak}
\end{equation}
which must hold for every choice of the sequence $(c_{0,0},c_{1,0},\ldots,c_{k,0})$.
By the special choice $(1,0,\ldots,0)$, the relation (\ref{eq:Equation_etak})
can be simplified to (\ref{eq:etak}).
\end{proof}
In a similar way, the sequence $(\beta_{2k})_{k\in\mathbb{N}}$ can
be calculated from the sequence $(c_{k,0})_{k\in\mathbb{N}}$.
\begin{lem}
The sequence $(\beta_{2k})_{k\in\mathbb{N}}$ is generated by the
discrete convolution
\begin{equation}
\beta_{2k}=\sum_{i=0}^{k}\mu_{k-i}c_{i,0}\label{eq:Convolution_beta2k}
\end{equation}
of the sequence $(c_{k,0})_{k\in\mathbb{N}}$ with the sequence $(\mu_{k})_{k\in\mathbb{N}}$,
which is defined recursively by $\mu_{0}=2$ and
\begin{equation}
\mu_{k}=\tfrac{4^{k}}{(4k)!}-\sum_{i=1}^{k}\mu_{k-i}\tfrac{(-1)^{i}}{(4i)!}\,,\quad k\geq1\,.\label{eq:muk}
\end{equation}
\end{lem}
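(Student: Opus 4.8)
The plan is to follow the proof of the preceding lemma. First I would justify the convolution ansatz (\ref{eq:Convolution_beta2k}). Solving (\ref{eq:ck0_var2}) for $\beta_{2k}$ writes it as $2c_{k,0}$ plus a fixed linear combination of $c_{0,0},\dots,c_{k-1,0}$ and $d_{0,0},\dots,d_{k-1,0}$ whose coefficients depend only on the index difference; substituting the convolution (\ref{eq:Convolution_dk0}) for each $d_{m,0}$ turns this into a convolution of $(c_{k,0})_{k\in\mathbb{N}}$ with a single kernel, so an ansatz of the form (\ref{eq:Convolution_beta2k}) is the correct one. For the initial value, the polynomial (\ref{eq:alpha_0}) gives $c_{0,0}=\tfrac12\beta_0$, hence $\beta_0=2c_{0,0}$ and $\mu_0=2$.

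To determine $\mu_k$ for $k\ge1$ I would, exactly as in the $\eta$-case, insert the ansatz (\ref{eq:Convolution_beta2k}) together with (\ref{eq:Convolution_dk0}) into the relation for $\beta_{2k}$, make both sides linear in the free coefficients $c_{0,0},\dots,c_{k,0}$, and read off $\mu_k$ by the special choice $(1,0,\dots,0)$. A more transparent route to the same relations is to evaluate the two boundary conditions at the free end: using (\ref{eq:cki_dki_var2}) in $\alpha_{2k}''(1)=\beta_{2k}$ and $\alpha_{2k}'''(1)=0$ (cf. (\ref{eq:BC_alpha})) yields
\begin{align*}
\beta_{2k}&=2\sum_{i=0}^{k}\tfrac{(-1)^i}{(4i)!}\,c_{k-i,0}+6\sum_{i=0}^{k}\tfrac{(-1)^i}{(4i+1)!}\,d_{k-i,0},\\
0&=2\sum_{i=1}^{k}\tfrac{(-1)^i}{(4i-1)!}\,c_{k-i,0}+6\sum_{i=0}^{k}\tfrac{(-1)^i}{(4i)!}\,d_{k-i,0}.
\end{align*}
With the generating functions $S_j(x)=\sum_k\tfrac{(-1)^k}{(4k+j)!}x^k$ (for $j=0,1$, and for $j=-1$ with the sum starting at $k=1$), $D(x)=\sum_k d_{k,0}x^k$ and $M(x)=\sum_k\mu_k x^k$, the second identity lets me eliminate $D(x)$, after which the first collapses to $M(x)S_0(x)=2S_0(x)^2-2S_1(x)S_{-1}(x)$.

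The hard part will be showing that this really yields the stated recursion (\ref{eq:muk}). By the same coefficient comparison used for $\eta$, (\ref{eq:muk}) together with $\mu_0=2$ is equivalent to the single power-series identity $M(x)S_0(x)=U(x)+1$ with $U(x)=\sum_k\tfrac{4^k}{(4k)!}x^k$, so in view of the previous paragraph the whole lemma reduces to the purely analytic identity
\[
2S_0(x)^2-2S_1(x)S_{-1}(x)=U(x)+1,
\]
which I expect to be the only nonroutine step. I would prove it with the substitution $x=-t^4$: then $S_0\mapsto\tfrac12(\cos t+\cosh t)$, $S_1\mapsto\tfrac1{2t}(\sin t+\sinh t)$ and $S_{-1}\mapsto\tfrac t2(\sinh t-\sin t)$, so that, using $\sin^2 t+\cos^2 t=1$ and $\cosh^2 t-\sinh^2 t=1$, the left-hand side simplifies to $1+\cos t\cosh t$; since $U(-t^4)=\sum_k\tfrac{(-4)^k}{(4k)!}t^{4k}=\cos t\cosh t$ is the classical series expansion, both sides agree and (\ref{eq:muk}) follows.
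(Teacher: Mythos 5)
Your proof is correct, but it reaches the recursion (\ref{eq:muk}) by a genuinely different route than the paper. The paper, after justifying the convolution ansatz and $\mu_{0}=2$ exactly as you do, inserts the ansatz into (\ref{eq:ck0_var2}) to obtain (\ref{eq:Equation_beta2k}); since the impulse choice $(1,0,\ldots,0)$ that worked for $(\eta_{k})$ would leave unevaluated $d$-terms behind, it does \emph{not} use that choice (as your first paragraph suggests), but instead substitutes the particular pair $c_{k,0}=\tfrac{(-1)^{k}}{(4k)!}$, $d_{k,0}=-\tfrac{4k(-1)^{k}}{3(4k)!}$ --- the ``natural choice'' whose $d$-sequence follows from the first lemma, as shown in Section \ref{subsec:Spezialfall_Parametrierung_Rudolph} --- whereupon (\ref{eq:Equation_beta2k}) collapses to (\ref{eq:muk}). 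You instead encode the two free-end boundary conditions, via (\ref{eq:BC_alpha}) and (\ref{eq:cki_dki_var2}), as the generating-function identities $B=2S_{0}C+6S_{1}D$ and $0=2S_{-1}C+6S_{0}D$, eliminate $D$ (legitimate, since $S_{0}(0)=1$ makes $S_{0}$ invertible as a formal power series), and reduce the lemma to the identity $MS_{0}=2S_{0}^{2}-2S_{1}S_{-1}=1+U$, which you verify under $x=-t^{4}$ using $\sin^{2}t+\cos^{2}t=1$, $\cosh^{2}t-\sinh^{2}t=1$ and the classical expansion $U(-t^{4})=\cos t\cosh t$; all of these steps check out, including the coefficient comparison showing that (\ref{eq:muk}) with $\mu_{0}=2$ is equivalent to $MS_{0}=1+U$. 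The paper's argument buys brevity and stays entirely within the recursive sequence manipulations already set up, at the price of a forward reference and an implicit combinatorial simplification; your argument requires extra machinery but is more transparent and yields a stronger by-product, namely the closed form $M(-t^{4})=\bigl(1+\cos t\cosh t\bigr)/\bigl(\tfrac{1}{2}(\cos t+\cosh t)\bigr)$ for the generating function of $(\mu_{k})_{k\in\mathbb{N}}$, which would also be the natural starting point for proving the explicit Euler-number formula for $\mu_{k}$ conjectured at the end of Section \ref{sec:FormalDiffParam}.
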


\begin{proof}
In principle, $\beta_{2k}$ can be calculated by solving (\ref{eq:ck0_var2})
for $\beta_{2k}$ and replacing $d_{k-1,0},d_{k-2,0},\ldots$ successively
with shifted versions of (\ref{eq:dk0_var2}). This shows that $\beta_{2k}$
is a linear combination of the coefficients of the sequence $(c_{0,0},c_{1,0},\ldots,c_{k,0})$,
justifying an ansatz of the form (\ref{eq:Convolution_beta2k}). A
comparison of (\ref{eq:alpha_0}) and (\ref{eq:Ansatz_alpha2k}) shows
that $c_{0,0}=\frac{1}{2}\beta_{0}$, and from (\ref{eq:Convolution_beta2k})
with $k=0$ we immediately get $\mu_{0}=2$. In order to determine
$\mu_{k}$, $k\geq1$, we solve (\ref{eq:ck0_var2}) for $\beta_{2k}$
and insert the ansatz (\ref{eq:Convolution_beta2k}), which yields
\begin{equation}
\sum_{i=0}^{k}\mu_{k-i}c_{i,0}=2c_{k,0}+\sum_{i=0}^{k-1}(-1)^{i}\left(c_{k-1-i,0}\tfrac{2(4i+3)}{(4i+4)!}+d_{k-1-i,0}\tfrac{6(4i+4)}{(4i+5)!}\right)\,.\label{eq:Equation_beta2k}
\end{equation}
To get rid of the coefficients $c_{i,0}$ and $d_{i,0}$ in (\ref{eq:Equation_beta2k}),
we can use the fact that the particular sequence $c_{k,0}=\frac{(-1)^{k}}{(4k)!}$
determines the sequence $d_{k,0}=-\frac{4k(-1)^{k}}{3(4k)!}$. This
follows rather easily from (\ref{eq:Convolution_dk0}) and (\ref{eq:etak})
and will be shown in Section \ref{subsec:Spezialfall_Parametrierung_Rudolph}.
After inserting these particular sequences, (\ref{eq:Equation_beta2k})
can be simplified to (\ref{eq:muk}).
\end{proof}
A numerical evaluation of the recursively defined sequences (\ref{eq:etak})
and (\ref{eq:muk}) strongly suggests, that $\eta_{k}$ and $\mu_{k}$
are defined by
\[
\eta_{k}=\tfrac{4^{k+1}(1-16^{k})}{6(4k)!}B_{4k}\,,\quad k\geq0
\]
and
\[
\mu_{k}=\tfrac{2}{4^{k}(4k)!}\sum_{i=0}^{2k}(-1)^{i}E_{2i}\tbinom{4k}{2i}\,,\quad k\geq0\,,
\]
where $B_{i}$ denotes the Bernoulli numbers and $E_{i}$ the Euler
numbers. This explicit representation may be advantageous for a convergence
analysis of the parametrizations (\ref{eq:Param_w_even}) and (\ref{eq:Param_u_even}).

\section{\label{sec:Examples}Two Notable Differential Parametrizations}

In the previous section, we have shown that the sequence $(c_{k,0})_{k\in\mathbb{N}}$
can be used as a design parameter for the formal differential parametrizations
(\ref{eq:Param_even}) of the Euler-Bernoulli beam. Based on the representations
(\ref{eq:Convolution_dk0}), (\ref{eq:etak}) and (\ref{eq:Convolution_beta2k}),
(\ref{eq:muk}) of the sequences $(d_{k,0})_{k\in\mathbb{N}}$ and
$(\beta_{2k})_{k\in\mathbb{N}}$, we show that there is a natural
choice for the sequence $(c_{k,0})_{k\in\mathbb{N}}$ that leads to
a flat parametrization, which is well-known from the literature (see
e.g. \cite{Rudolph_Shaker2003}). Subsequently, we discuss a special
formal differential parametrization where the parametrizing output
$y(t)$ allows a physical interpretation.

\subsection{\label{subsec:Spezialfall_Parametrierung_Rudolph}A Natural Choice}

Consider the sum (\ref{eq:Convolution_dk0}). If we split off the
term with $i=0$ and substitute (\ref{eq:etak}) for $\eta_{k}$,
we get
\begin{equation}
d_{k,0}=-\tfrac{(-1)^{k}}{3(4k-1)!}c_{0,0}+\sum_{i=1}^{k}\eta_{k-i}\left(c_{i,0}-\tfrac{(-1)^{i}}{(4i)!}c_{0,0}\right)\,.\label{eq:Convolution_dk0_rewritten}
\end{equation}
With the special choice
\begin{equation}
c_{k,0}=\tfrac{(-1)^{k}}{(4k)!}c_{0,0}\,,\quad k\geq0\label{eq:Obvious_Choice_ck0}
\end{equation}
for the sequence $(c_{k,0})_{k\in\mathbb{N}}$, the sum in (\ref{eq:Convolution_dk0_rewritten})
vanishes, and therefore (\ref{eq:Convolution_dk0_rewritten}) simplifies
to
\[
d_{k,0}=-\tfrac{(-1)^{k}}{3(4k-1)!}c_{0,0}\,,\quad k\geq1\,.
\]
Expanding with $4k$ finally yields the relation
\begin{equation}
d_{k,0}=-\tfrac{4k(-1)^{k}}{3(4k)!}c_{0,0}\,,\quad k\geq0\,,\label{eq:Result_dk0}
\end{equation}
which also includes the case $k=0$ with $d_{0,0}=0$.

Analogously, splitting off the term with $i=0$ in the sum (\ref{eq:Convolution_beta2k})
and substituting (\ref{eq:muk}) for $\mu_{k}$ yields
\begin{equation}
\beta_{2k}=\tfrac{4^{k}}{(4k)!}c_{0,0}+\sum_{i=1}^{k}\mu_{k-i}\left(c_{i,0}-\tfrac{(-1)^{i}}{(4i)!}c_{0,0}\right)\,.\label{eq:Convolution_beta2k_rewritten}
\end{equation}
By the choice (\ref{eq:Obvious_Choice_ck0}) for the sequence $(c_{k,0})_{k\in\mathbb{N}}$,
it is evident that the sum vanishes again, and (\ref{eq:Convolution_beta2k_rewritten})
simplifies to
\[
\beta_{2k}=\tfrac{4^{k}}{(4k)!}c_{0,0}\,,\quad k\geq1\,.
\]
For $k=0$ we have $\beta_{0}=2c_{0,0}$, and therefore the complete
sequence is given by
\begin{equation}
\beta_{2k}=\begin{cases}
2c_{0,0}\,, & k=0\\
\tfrac{4^{k}}{(4k)!}c_{0,0}\,, & k\geq1\,.
\end{cases}\label{eq:beta2k_Rudolph}
\end{equation}
Finally, plugging (\ref{eq:Obvious_Choice_ck0}) and (\ref{eq:Result_dk0})
into (\ref{eq:cki_dki_var2}) results in the complete set of coefficients
\begin{align}
c_{k,i} & =\tfrac{4(-1)^{k}}{(4i+2)!(4(k-i))!}c_{0,0}\,, &  & 0\leq i\leq k\,,\,k\geq0\label{eq:cki_Rudolph}\\
d_{k,i} & =-\tfrac{16(-1)^{k}(k-i)}{(4i+3)!(4(k-i))!}c_{0,0}\,, &  & 0\leq i\leq k\,,\,k\geq0\label{eq:dki_Rudolph}
\end{align}
of the polynomials (\ref{eq:Ansatz_alpha2k}). If we set the scaling
factor in (\ref{eq:Obvious_Choice_ck0}) to $c_{0,0}=2$, we get the
same differential parametrization\begin{subequations}\label{eq:Param_Rudolph}
\begin{eqnarray}
w(z,t) & = & \sum_{k=0}^{\infty}4(-1)^{k}\sum_{i=0}^{k}\left(\tfrac{1}{(4i+2)!(4(k-i))!}z^{4i+2}-\tfrac{4(k-i)}{(4i+3)!(4(k-i))!}z^{4i+3}\right)y^{(2k)}(t)\label{eq:Param_w_Rudolph}\\
u(t) & = & 4y(t)+\sum_{k=1}^{\infty}2\tfrac{4^{k}}{(4k)!}y^{(2k)}(t)\label{eq:Param_u_Rudolph}
\end{eqnarray}
\end{subequations}that was derived, e.g., in \cite{Rudolph_Shaker2003}
by means of operational calculus.\footnote{In contrast to our double sum representation (\ref{eq:Param_w_Rudolph}),
in \cite{Rudolph_Shaker2003}, the parametrization of the deflection
$w(z,t)$ is expressed by a single sum of real and imaginary parts
of powers of complex numbers. Also, the input used in \cite{Rudolph_Shaker2003}
has the opposite sign as compared to our input $u(t)$.} As shown in \cite{Rudolph_Shaker2003}, the series (\ref{eq:Param_w_Rudolph})
and (\ref{eq:Param_u_Rudolph}) converge for all trajectories $y(t)$
of Gevrey class $\gamma<2$. Hence, this parametrizing output $y(t)$
is a flat output.

\subsection{Formal Differential Parametrization by the Bending Moment at the
Clamped Boundary}

In motion planning, one is often interested in a physical interpretation
of the parametrizing output $y(t)$ in terms of a boundary value of
the system. For the flat output $y(t)$ of the differential parametrization
(\ref{eq:Param_Rudolph}), no such physical interpretation is known.
In contrast, for the fully-actuated Euler-Bernoulli beam, with both
the bending moment and the shear force at the free end as inputs,
it is not difficult to show that the bending moment and the shear
force at the clamped end form a flat output. For our underactuated
configuration (\ref{eq:PDE_RB_Beam}), we show that there exists at
least one (useful) formal differential parametrization (\ref{eq:Param_even})
where the parametrizing output $y(t)$ allows a physical interpretation.

First, let us restate the formal differential parametrization of the
deflection $w(z,t)$ by plugging (\ref{eq:Ansatz_alpha2k}) into (\ref{eq:Param_w_even}):
\[
w(z,t)=\sum_{k=0}^{\infty}\sum_{i=0}^{k}\left(c_{k,i}z^{4i+2}+d_{k,i}z^{4i+3}\right)y^{(2k)}(t)\,.
\]
Evaluating its second spatial derivative at $z=0$ yields the parametrization
of the bending moment
\begin{equation}
\partial_{z}^{2}w(0,t)=\sum_{k=0}^{\infty}2c_{k,0}y^{(2k)}(t)\label{eq:Bending_Moment_Clamped_Boundary}
\end{equation}
at the clamped boundary. If we choose the sequence $(c_{k,0})_{k\in\mathbb{N}}$
as
\begin{equation}
(c_{0,0},c_{1,0},c_{2,0},\ldots)=(\tfrac{1}{2},0,0,\ldots)\,,\label{eq:ck0_impulse}
\end{equation}
only the first term of the sum is left, and (\ref{eq:Bending_Moment_Clamped_Boundary})
simplifies to
\begin{equation}
y(t)=\partial_{z}^{2}w(0,t)\,.\label{eq:Interp_y}
\end{equation}
Thus, in this case, the function $y(t)$ is simply the bending moment
at the clamped boundary. By the choice (\ref{eq:ck0_impulse}) the
discrete convolutions (\ref{eq:Convolution_dk0}) and (\ref{eq:Convolution_beta2k})
simplify to $d_{k,0}=\tfrac{1}{2}\eta_{k}\,,\,k\geq0$ and $\beta_{2k}=\tfrac{1}{2}\mu_{k}\,,\,k\geq0$.
Then with (\ref{eq:cki_dki_var2}) we get the formal differential
parametrization\begin{subequations}\label{eq:Param_Bending_Moment}
\begin{eqnarray}
w(z,t) & = & \sum_{k=0}^{\infty}\left((-1)^{k}\tfrac{1}{(4k+2)!}z^{4k+2}+\sum_{i=0}^{k-1}(-1)^{i}\tfrac{3}{(4i+3)!}\eta_{k-i}z^{4i+3}\right)y^{(2k)}(t)\label{eq:Param_w_Bending_Moment}\\
u(t) & = & \sum_{k=0}^{\infty}\mu_{k}y^{(2k)}(t)\,.\label{eq:Param_u_Bending_Moment}
\end{eqnarray}
\end{subequations}However, since the sequence $(\mu_{k})_{k\in\mathbb{N}}$
does not go to zero fast enough, (\ref{eq:Param_u_Bending_Moment})
cannot converge for any non-analytic function $y(t)$ of Gevrey order
$\gamma>1$. Hence, neither does (\ref{eq:Param_w_Bending_Moment}).
A numerical evaluation of the elements $\mu_{k}$ reveals that $\frac{\mu_{k+1}}{\mu_{k}}\approx\frac{1}{24}$,
for $k\ge2$. Since $24^{k}<(2k)!$ for large $k$, the bound $\sup_{t\in\mathbb{R}^{+}}\left|y^{(2k)}(t)\right|$
guaranteed by (\ref{eq:Gevrey_class}) grows much faster than the
coefficients $\mu_{k}$ converge to zero. Consequently, the product
$\mu_{k}y^{(2k)}(t)$ diverges and the parametrizing output $y(t)$
is not a flat one.

However, a flatness-based transition between two equilibria is still
possible using the divergent series (\ref{eq:Param_u_Bending_Moment}).
In \cite{LarocheMartin_IJRNC2000}, simulation results for a heat
conduction problem showed that a least term summation allows feedforward
control even based on divergent series. More precisely, the divergent
series discussed in \cite{LarocheMartin_IJRNC2000} first converges
very fast and then diverges very fast. A similar effect can be observed
for our beam parametrization (\ref{eq:Param_u_Bending_Moment}), for
suitable trajectories $y(t)$. Now, the idea of the least term summation
in \cite{LarocheMartin_IJRNC2000} is to take into account only the
convergent part for each time $t$. Hence, instead of the series (\ref{eq:Param_u_Bending_Moment}),
the feedforward control $u(t)$ is calculated by 
\begin{equation}
u(t)=\sum_{k=0}^{n_{t}}\mu_{k}y^{(2k)}(t)\,,\label{eq:u_LeastTermSummation}
\end{equation}
with $n_{t}$ defined for each $t$ as the smallest integer greater
than $1$ that meets
\[
\left|\mu_{n_{t}+1}y^{(2(n_{t}+1))}(t)\right|>\left|\mu_{n_{t}}y^{(2n_{t})}(t)\right|\,.
\]

Simulation studies were performed in order to illustrate the usefulness
of this approach. Here, we defined the desired trajectory for $y(t)$
based on a function
\[
\Phi_{\sigma}(t)=\begin{cases}
0 & t\leq0\\
\tfrac{\int_{0}^{t}\phi_{\sigma}(\tau)\mathrm{d}\tau}{\int_{0}^{1}\phi_{\sigma}(\tau)\mathrm{d}\tau} & t\in(0,1)\\
1 & t\geq1\,,
\end{cases}
\]
with $\phi_{\sigma}(t)=\exp\left(-1/((1-t)t)^{\sigma}\right)$, scaled
for a transition in $1\,\mathrm{s}$. This function is Gevrey of order
$1+\tfrac{1}{\sigma}$ for $\sigma>0$. In order to transition the
beam between the equilibria $w(z,0)=0$ and $w(z,T)=\tfrac{1}{2}z^{2}$
in finite time $T$, based on the definition of the parametrizing
output $y(t)$ in (\ref{eq:Interp_y}), the corresponding initial
and final value of $y(t)$ are $y(0)=0$ and $y(T)=1$. By setting
$T=5\,\mathrm{s}$ and $\sigma=1.1$, we use the same reference trajectory

\begin{equation}
y(t)=\Phi_{\sigma}(\tfrac{t}{T})\label{eq:Reference_trajectory_y}
\end{equation}
that was used in \cite{HaasRudolph1999} for the flat output in (\ref{eq:Param_Rudolph}).
\begin{rem}
In \cite{HaasRudolph1999}, the transition time is $11.5\,\mathrm{ms}$.
As our normalized beam and the one with physical parameters in \cite{HaasRudolph1999}
are related by a scaling of the spatial variable and a time-scaling
with the factor $\tfrac{1}{2.3}10^{3}$, this corresponds to the transition
time $T=5\,\mathrm{s}$ used in our simulation.
\end{rem}

Fig. \ref{fig:Deflection} and Fig. \ref{fig:y} give simulation results
for the feedforward control (\ref{eq:u_LeastTermSummation}) based
on the reference trajectory (\ref{eq:Reference_trajectory_y}). The
distributed beam deflection in Fig. \ref{fig:Deflection} shows that
the desired transition is achieved. In Fig. \ref{fig:y}, the (simulated)
bending moment at the clamped boundary, i.e. the parametrizing output
$y(t)$, matches the desired reference trajectory (\ref{eq:Reference_trajectory_y}),
apart from some small deviations. Thus, using only the formal differential
parametrization (\ref{eq:Param_Bending_Moment}), we managed to implement
the same transition considered in \cite{HaasRudolph1999} on the basis
of the flat differential parametrization (\ref{eq:Param_Rudolph}).
However, in contrast to the flat output used in \cite{HaasRudolph1999},
our parametrizing output offers a physical interpretation. This is
particularly interesting for applications where the bending moment
must not exceed certain bounds, since we can take account of these
bounds in the design of the desired trajectory $y(t)$ directly.

Nevertheless, it is important to mention that choosing the transition
time $T$ too small means that the series (\ref{eq:Param_u_Bending_Moment})
diverges too soon. Consequently, a least term summation would not
make sense any more. In contrast, increasing the transition time improves
the results, in particular with respect to the small deviations visible
in Fig. \ref{fig:y}. For instance, with $T=10\,\mathrm{s}$ the simulated
bending moment at the clamped boundary almost perfectly matches the
reference (\ref{eq:Reference_trajectory_y}). We also expect that
evaluating the divergent series (\ref{eq:Param_u_Bending_Moment})
with the more sophisticated summation methods used in \cite{WagnerMeurerZeitz2004}
and \cite{WagnerMeurerKugi2008} should further improve the results.

\begin{figure}
\noindent \begin{centering}
\includegraphics[width=0.5\columnwidth]{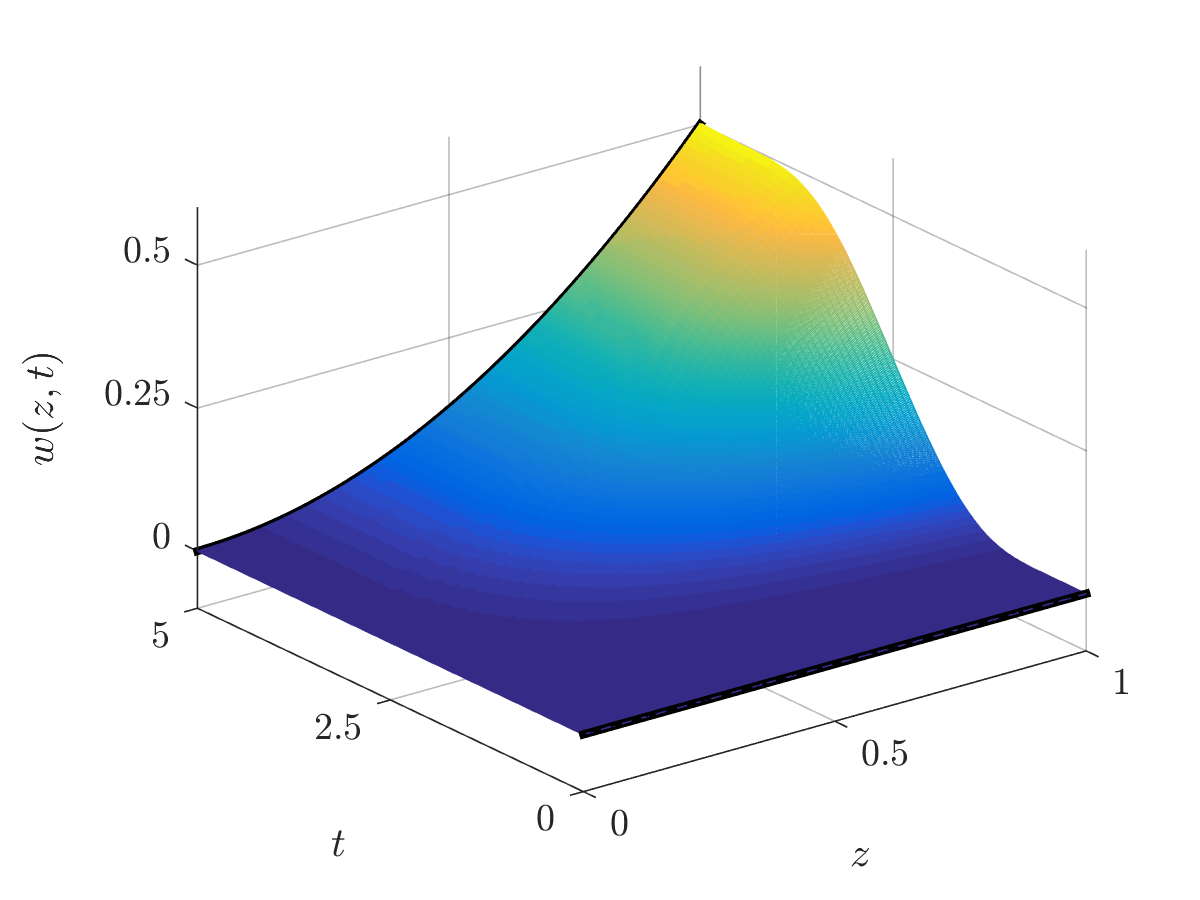}
\par\end{centering}
\caption{\label{fig:Deflection}Deflection $w(z,t)$ of the beam}
\end{figure}
\begin{figure}
\noindent \centering{}\includegraphics[width=0.5\columnwidth]{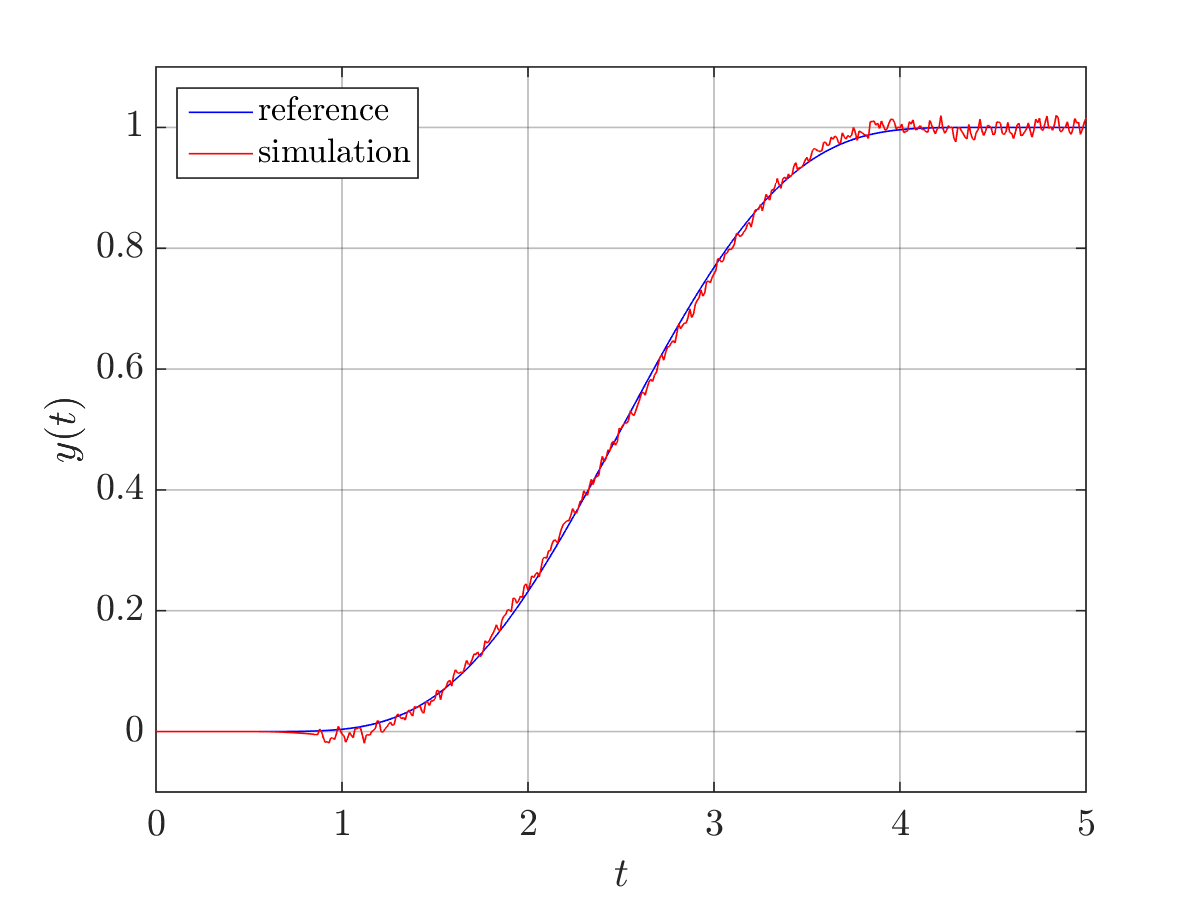}\caption{\label{fig:y}Bending moment $y(t)$ at the clamped boundary}
\end{figure}

\paragraph{Acknowledgements}

This work has been supported by the Austrian Science Fund (FWF) under
grant number P 29964-N32 and the Pro$^2$Future competence center
in the framework of the Austrian COMET-K1 programme under contract
no. 854184.

\bibliographystyle{IEEEtran}
\bibliography{Literature_Bernd}

\end{document}